\def\br#1\er{\textcolor{red}{#1}}
\def\bb#1\eb{\textcolor{blue}{#1}}
\newcommand{\TT}{\mathrm{T}}
\newcommand{\HH}{\mathrm{H}}
\newcommand{\VV}{\mathrm{V}}
\newcommand{\y}{y}
\newcommand{\ri}{\text{Ric}}
\newcommand{\f}{\rho}
\newcommand{\la}{P}
\newcommand{\hh}{\mid}
\newcommand{\vv}{\cdot}
\newcommand{\di}{\text{div}}
\newcommand{\PP}{\mathfrak{P}}
\newcommand{\vd}{\dot{\partial}}
\newcommand{\vol}{\varSigma^+}
\newcommand{\can}{\mathbb{C}}
\newcommand{\dd}{\mathrm{d}}
\newcommand{\action}{\mathscr{S}}
\newcommand{\ca}{C}
\theoremstyle{definition}
\newtheorem{thm}{Theorem}[section]
\newtheorem{lemma}[thm]{Lemma}
\newtheorem{cor}[thm]{Corollary}
\newtheorem{defi}[thm]{Definition}
\newtheorem{rem}[thm]{Remark}
\title[Schur theorem for weakly Landsberg Finsler metrics]{Schur theorem for the Ricci curvature \\ of any weakly Landsberg Finsler metric}
\author[F. F. Villase\~nor]{Fidel F. Villase\~nor}
\address{Departamento de Geometr\'{\i}a y Topolog\'{\i}a, Facultad de Ciencias \&
\hfill\break\indent IMAG (Centro de Excelencia María de Maeztu)
\hfill\break\indent Universidad de Granada, 18071 Granada, España}
\email{fidelfv@ugr.es}
\begin{document}

\begin{abstract} 
The Ricci version of the Schur theorem is shown to hold for a wide class of Finsler metrics. What is more, let $F$ be any (positive definite) Finsler metric such that $\text{Ric}=\rho F^2$ with $\rho\colon M^n\rightarrow\mathbb{R}$ (i.e., $(M^n,F)$ is Einstein) and $n\geq 3$. For $x\in M$, we express $\text{d}\rho_x$ as an average over the indicatrix in $\text{T}_xM$ of the Hilbert $1$-form weighted by a combination of derivatives of the mean Landsberg tensor. As a consequence of this general expression, if the metric is weakly Landsberg, then $\rho$ must be constant. The proof is based on the invariance of natural functionals under $\text{Diff}(M)$.

Furthermore, we revisit an independent argument which proves the Schur theorem for the class of pseudo-Finsler metrics with quadratic Ricci scalar, improving previous results on the topic. 
\end{abstract}

\keywords{Finsler geometry, Pseudo-Finsler geometry, Ricci curvature, Schur theorem, weakly Landsberg metric, variational calculus, diffeomorphism invariance, Finsler indicatrix}

\maketitle

\tableofcontents

\section{Introduction} 

In 1886, Friedrich Schur showed that if a connected Riemannian manifold $M^n$ with $n\geq 3$ has sectional curvature that varies only as a function of the manifold's points, then it must be constant \cite{Schur}. Later, it was realized that this could be strengthened by using the second Bianchi identity.\footnote{Even though Bianchi published his identities in 1902 \cite{Bianchi}, the contracted ones had already appeared in 1880 by the hand of Aurel Voss \cite{Voss}, and these are enough to derive the result, as we comment on below.} Specifically, if a manifold's Ricci curvature is a function only of its points, then it is constant. This refined result, by closeness to the original one, is referred to as \emph{Schur's theorem} also. 

Being valid in any metric signature (see e.g. \cite{ONeill}), it constraints the possible pseudo-Riemannian geometries of dimension greater than $2$: they cannot have varying Ricci curvature while
maintaining isotropy. From a different viewpoint, it states that the only metrics with isotropic Ricci curvature are the solutions to Einstein's vacuum equation with a cosmological constant. Given the theorem's importance, the problem of its extension to Finsler or Lorentz-Finsler\footnote{The reader is referred to \cite{GGP,Kost,JS,Pfeifer,BJS} for a sample of the growing interest in generalizing the relativistic spacetimes by means of a Finslerian geometry, from the viewpoints of both physics and mathematics.} geometries is a well-known open question \cite{BaoRobles}.

(Pseudo-)Finsler geometry generalizes the
(pseudo-)Riemannian one by allowing for arbitrary (pseudo-)norms instead of just scalar products. In it, there is a notion of curvature tensor \cite[(6.29)]{BejFar} which depends only on the geodesic spray of the pseudo-Finsler function. The flag curvature and Ricci curvature can both be derived
from this curvature tensor. Thus, there are two Finslerian Schur theorems that one might conceive of: the flag curvature version and the Ricci curvature version. The former has been proven for any pseudo-Finsler metric \cite{Berwald,delRiego,Matsumoto,Szilasi}, while the latter is the open problem we are interested in, mainly in the standard Finsler case.

A milestone was Robles', who proved the Ricci version for Randers metrics \cite{Robles}, while many researchers showed it for metrics of other specific forms \cite{YuYou,CST,SY,ZhangXia}. In \cite[Th. 1.1]{SadeghzadehRazaviRezaei}, it was proven for a Finsler manifold which is Landsberg, compact and of so-called \emph{SCR type}. In \cite[Th. 1]{DengKerteszYan}, it was proven for any Berwald Finsler manifold. For completeness, let us mention that Schur-type results have been established also for the mean Berwald curvature \cite{TN,Li}. 

In this article, we present a proof of the Ricci curvature version of the Schur theorem for weakly Landsberg Finsler manifolds, Cor. \ref{schur}, which improves upon \cite[Th. 1.1]{SadeghzadehRazaviRezaei} and \cite[Th. 1]{DengKerteszYan}. We use our main theorem, Th. \ref{main}, to prove it, so our findings can be synthesized as follows.

\begin{quote}
	{\bf Theorem}. On a connected Einstein Finsler manifold $M^n$ of Ricci curvature $\rho\colon M^n\rightarrow\mathbb{R}$, there holds a pointwise relation
	\begin{equation}
		\left(n-2\right)\dd\f_x=\frac{\int_{S_x}\,\theta_{(x,y)}\,d\varSigma_x(y)}{\int_{S_x}\,d\varSigma_x(y)},
		\label{integral identity}
	\end{equation}  
	where $d\varSigma_x$ is the Sasaki-induced volume form on the indicatrix $S_x$ and the $1$-form $\theta$ is a sum of linear and quadratic terms on the mean Landsberg tensor $P_i\,\dd x^i$ and its derivatives. In particular, if $P_i=0$ and $n\geq 3$, then $\f$ is constant.
\end{quote}

Here we explain the underlying philosophy. If one wishes to prove the Ricci-Schur theorem, a first approach would be to use the Finslerian second Bianchi identity \cite[(3.5.3)]{BaoChernShen}, but this is not fruitful \cite[App. B 2]{Robles}. Notwithstanding, in the classical proof, only the contracted identity, or the fact that the Einstein tensor has vanishing divergence, is used. Noticeably, this fact is due to the diffeomorphism invariance of the Einstein-Hilbert action, as is often discussed in the foundations of the general theory of relativity \cite[Ch. 3.3.3]{Str}, \cite[Ch. 9.2]{Bleecker}.
Hence, the classical Schur theorem can be viewed as a consequence of the diff-invariance of a functional, suggesting a possible strategy for generalizations. Recent research on Finslerian gravity has extended the Hilbert functional to pseudo-Finsler geometry\footnote{Actually, this was done for the first time in \cite{ChSh08}, even if implicitly and in the positive definite case.} \cite{PW,HPV19,JSV}, and some of the consequences of the extended invariance have already been explored \cite{HPV22}. It is by continuing this path that we reach the above theorem. Let us emphasize a difference with the classical result. We work on the indicatrix (equiv., projectivized tangent) bundle but we are still forced to use only diffeomorphisms of $M$, so there integral identities such as \eqref{integral identity} arise. Therefore, our method is only suited for metrics that are positive definite on all of $\TT M\setminus\mathbf{0}$, so that the indicatrices are compact. 

Our study is supplemented with a revision of those partial Ricci-Schur theorems in the literature which do hold for pseudo-Finsler metrics \cite[Th. 1]{DengKerteszYan}, \cite[Lem. 3.2]{SadeghzadehRazaviRezaei}, \cite[Th. 3.1]{BacsoRezaei}. We clarify how they can be unified into a single statement, Th. \ref{berwald}, whose only hypothesis is that of the $2$-homogeneous Ricci scalar being quadratic. 

The manuscript is structured as follows. In \S \ref{prelims}, we discuss in detail all the required Finslerian notions, particularly the volume form on the positive projectivization of $\TT M$. In \S \ref{diff invariance}, we define two Finslerian functionals and establish their $\text{Diff}(M)$-invariance and the identities that follow from it. In \S \ref{main results}, we employ them to prove Th. \ref{main} and Cor. \ref{schur}; additionally we prove the mentioned pseudo-Finsler result, Th. \ref{berwald}. Finally, in \S \ref{conclusions}, we provide a summary of all our findings and analyze their implications for the question of a general Finslerian
Ricci-Schur theorem.

\section{Notation and conventions} \label{prelims}

$M$ will always be a smooth manifold of dimension $n\geq 2$ and $a$, $b$, $i$, $j$, $k$, $l$ will be indices in $\left\{1,\ldots,n\right\}$ for which the Einstein summation convention holds. Let $\pi\colon \TT M\rightarrow M$ denote the projection of the tangent bundle, whose elements we regard as pairs $(x,\y)$ where $x\in M$ and $\y\in \TT_xM$. We will also consider a subset $A\subseteq \TT M\setminus\mathbf{0}$ which is open, \emph{conic} (i.e., if $(x,\y)\in A$, then $(x,\mu\y)\in A$ for all $\mu\in\left]0,+\infty\right[$) and with $\pi(A)=M$. In the same way as in \cite[Ch. 1.1]{BaoChernShen}, we denote by $(U,\left(x^i\right))$ an arbitrary coordinate chart for $M$, inducing a natural chart $(\TT U,\left(x^i,\y^i\right))$ for $\TT M$ and, then, for $A$. Whenever convenient, we abbreviate 
\[
\partial_i:=\frac{\partial}{\partial x^i},\qquad\dot{\partial}_i:=\frac{\partial}{\partial\y^i}.
\]

We shall adopt the formalism of \emph{($A$-)anisotropic tensors}\footnote{Which will always be taken to be smooth, i.e. $\mathcal{C}^\infty$ or just as differentiable as necessary, the same as $M$ and $A$.} \cite{Jav19}. These contain the same information as the d-tensors \cite[Ch. 2.5]{BucataruMiron}, but without the need to keep track of wether they live on the horizontal or vertical distributions of $A$, see \cite[\S II C]{HPV22}. Anyway, typically we will work only with their components in natural coordinates, e.g., an anisotropic $1$-form $\theta$ will be characterized by the local functions $\theta_i$ such that $\theta_{(x,\y)}=\theta_i(x,\y)\,\dd x^i$ for $(x,\y)\in A$. The prime example is the \emph{Liouville}, or \emph{canonical}, \emph{anisotropic vector field} $\can$ (cf. \cite{HPV19,HPV22,JSV}):
\[
\can=\can^i\,\partial_i,\qquad\can^i(x,\y):=\y^i\text{ for all }(x,\y)\in \TT M.
\]
As a particular case, the anisotropic functions are just the smooth functions on $A$; we write $\mathcal{F}(A)$ for the set of all of these, which in a natural manner contains $\mathcal{F}(M)$, that of of smooth functions on $M$.\footnote{In fact, as is standard, for any manifold $N$ we will understand that $\mathcal{F}(N)$ is the $\mathbb{R}$-algebra of its smooth functions, $\mathfrak{X}(N)$ the Lie algebra of its tangent vector fields, and $\varOmega(N)$ the graded algebra of its differential forms.} A function $f\in\mathcal{F}(A)$ is \emph{(positively)} $r$-homogeneous if $f(x,\mu\y)=\mu^r f(x,\y)$ for all $(x,\y)\in A$ and $\mu\in\left]0,+\infty\right[$; analogous definition for an anisotropic tensor of arbitrary covariance and contravariance degrees in terms of its components. On the other hand, the \emph{vertical derivative} of an anisotropic tensor is another one of one covariant degree more and whose components we denote by 
\begin{equation}
	f_{\vv j}:=\dot{\partial}_jf=\frac{\partial f}{\partial\y^j},\qquad\theta_{i\,\vv j}:=\dot{\partial}_j\theta_i=\frac{\partial\theta_i}{\partial\y^j},\qquad\ldots
	\label{vertical derivatives}
\end{equation}
\emph{Euler's theorem} lays at the core of the Finslerian theory, relating homogeneity with vertical derivatives and the Liouville field (cf. \cite{BejFar,BaoChernShen,BucataruMiron,JSV,HPV22}):

\begin{thm} \label{euler}
	For $r\in\mathbb{R}$, a function $f\in\mathcal{F}(A)$ is $r$-homogeneous if and only if the contraction of its vertical derivative with $\mathbb{C}$ equals $rf$, that is,
	\[
	\y^if_{\vv i}=rf.
	\]
	Then the vertical derivative of $f$ is $(r-1)$-homogeneous. The same holds for the $r$-homogeneity of $A$-anisotropic tensors of arbitrary type.
\end{thm}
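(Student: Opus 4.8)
The plan is to reduce both directions of the equivalence to a single-variable ODE along the ray $\mu\mapsto(x,\mu\y)$, which stays inside $A$ because $A$ is conic. Fix $(x,\y)\in A$ and set $g(\mu):=f(x,\mu\y)$ for $\mu\in\left]0,+\infty\right[$, a smooth function of $\mu$. For the forward implication, if $f$ is $r$-homogeneous then $g(\mu)=\mu^r g(1)$; differentiating in $\mu$ via the chain rule gives $g'(\mu)=\y^i(\vd_i f)(x,\mu\y)=r\mu^{r-1}g(1)$, and setting $\mu=1$ recovers the Euler identity $\y^i f_{\vv i}=rf$ at $(x,\y)$, which was arbitrary.

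For the converse, suppose $\y^i f_{\vv i}=rf$ everywhere on $A$. The key step is to evaluate this identity not at $(x,\y)$ but at the rescaled point $(x,\mu\y)$, where it reads $\mu\,\y^i(\vd_i f)(x,\mu\y)=rf(x,\mu\y)$, i.e. $\mu\,g'(\mu)=r\,g(\mu)$. Introducing $h(\mu):=\mu^{-r}g(\mu)$, one computes $h'(\mu)=\mu^{-r-1}\bigl(\mu g'(\mu)-rg(\mu)\bigr)=0$, so $h$ is constant; hence $g(\mu)=\mu^r h(1)=\mu^r f(x,\y)$, which is exactly $r$-homogeneity. The homogeneity of the vertical derivative then follows by differentiating $f(x,\mu\y)=\mu^r f(x,\y)$ with respect to $\y^j$: the chain rule yields an extra factor $\mu$ on the left, so $\mu(\vd_j f)(x,\mu\y)=\mu^r(\vd_j f)(x,\y)$, and cancelling $\mu$ gives the $(r-1)$-homogeneity of $f_{\vv j}$.

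Finally, I would extend the result to an $A$-anisotropic tensor of arbitrary type by applying the scalar argument verbatim to each of its components in natural coordinates, since both homogeneity and the Euler contraction with $\can$ are defined componentwise; one may note that this is chart-independent because the natural-coordinate transition functions rescale the components by factors depending on $x$ alone. I do not expect a genuine obstacle: this is the Finslerian form of the classical Euler theorem on homogeneous functions, and the only points requiring care are that the ray $\mu\mapsto(x,\mu\y)$ remains in the domain—ensured by conicity—and that the ODE is solved on the open half-line $\left]0,+\infty\right[$ without assuming any extension of $f$ across $\mu=0$.
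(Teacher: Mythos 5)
Your proof is correct and is the standard argument: restricting to the ray $\mu\mapsto(x,\mu\y)$ (which conicity keeps inside $A$), getting the identity $\mu g'(\mu)=rg(\mu)$ by the key step of evaluating the Euler relation at the rescaled point so that the contraction $(\mu\y)^i(\vd_i f)(x,\mu\y)$ produces the factor $\mu$, solving the ODE on $\left]0,+\infty\right[$, and extending componentwise to tensors using that natural-coordinate transitions rescale components by $x$-dependent factors only. The paper itself states this theorem without proof, recalling it as a classical fact from the cited references, and your argument is precisely the textbook one found there, so there is no methodological divergence to report.
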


\subsection{Pseudo-Finsler geometry and Einstein metrics}

We refer to \cite{BejFar,Shen} for a systematic treatment of pseudo-Finsler manifolds. 
Many distinguishig features of the Lorentz-Finsler ones are analyzed in \cite{JS2}. However, our main case of interest is the standard Finsler one, covered by \cite{BaoChernShen}. 

\begin{defi} \label{pseudo-finsler}
	Let $A\subseteq \TT M\setminus\mathbf{0}$ be as above. A \emph{pseudo-Finsler metric (defined on $A$)} is a $2$-homogeneous function $L\in\mathcal{F}(A)$ whose \emph{fundamental tensor} $g$, of components
	\[
	g_{ij}(x,\y):=\frac{1}{2}L_{\vv i\vv j}(x,\y)=\frac{1}{2}\frac{\partial^2L}{\partial\y^i\partial\y^j}(x,\y)
	\]
	is non-degenerate at all $(x,\y)\in A$. The pseudo-Finsler metric $L$ is a \emph{(standard) Finsler metric} if it is defined on $\TT M\setminus\mathbf{0}$ and $g$ is positive definite there. Then it follows that $L$ is positive everywhere, so we shall identify it with its ($1$-homogeneous) square root
	\[
	F:=\sqrt{L}\in\mathcal{F}(\TT M\setminus\mathbf{0})
	\]
	and denote by $\text{Fins}(M)$ the set of all of these $F$'s. 
\end{defi}

\begin{rem}
	The main point of this definition is to make apparent that metrics which are positive definite but defined only on a proper $A\subset \TT M\setminus\mathbf{0}$ (such as Kropina's \cite{Kropina}) do not qualify as \emph{Finsler} for us. Most results in \S \ref{diff invariance} and \S \ref{main results} will not apply to them.
\end{rem}

As is typical, we lower and raise indices of anisotropic tensors, resp., with the components $g_{ij}$ and with those of the inverse fundamental tensor, $g^{ij}$, as in the following. The \emph{Cartan tensor} $\ca=\ca_{ijk}\,\dd x^i\otimes\dd x^j\otimes\dd x^k$ of a pseudo-Finsler metric $L$ has as its components
\[
\ca_{ijk}:=\frac{1}{2}g_{ij\,\vv k}=\frac{1}{2}\frac{\partial g_{ij}}{\partial\y^k}.
\]
As these are symmetric, the \emph{mean Cartan tensor} $\text{tr}_g C=C_i\,\dd x^i$ is well-defined, with components
\[
C_i:=C^{a}_{ia}=g^{ab}C_{iab}.
\]
In the Finsler case, we will also need to consider the \emph{Hilbert $1$-form} $\omega$,
\begin{equation}
	\omega_i:=F_{\vv i}=\frac{\y_i}{F}=g_{ia}\frac{\y^a}{F}.
	\label{hilbert form}
\end{equation}

The geodesic equation of the pseudo-Finsler metric $L$ provides its \emph{spray coefficients}, namely
\begin{equation}
	G^i=\frac{1}{4}g^{ic}\left(\frac{\partial g_{cb}}{\partial x^a}+\frac{\partial g_{ac}}{\partial x^b}-\frac{\partial g_{ab}}{\partial x^c}\right)\y^a\y^b.
	\label{spray}
\end{equation}
For the next definitions, we mostly follow \cite{HPV19,HPV22}, particularly \S II B of each of them. We use \eqref{spray} to define the \emph{horizontal subbundle $\HH A\subset \TT A$} associated with the pseudo-Finsler metric $L$. Namely, for $(x,\y)\in A$, we put
\begin{equation}
	\HH _{(x,\y)}A:=\text{Span}\left\{\left.\delta_i\right|_{(x,\y)}\colon i\in\left\{1,\dots,n\right\}\right\},\qquad\delta_i:=\partial_i-G_{\vv i}^a\dot{\partial}_a;
	\label{horizontal}
\end{equation}
this way, a local basis of $\TT A=\HH A\oplus \VV A$ is $\left\{\delta_i,\dot{\partial}_i\colon i\in\left\{1,\dots,n\right\}\right\}$. Moreover, the horizontal subbundle allows one to define Christoffel symbols $\Gamma^i_{jk}$ of the (horizontal part of the) \emph{Chern-Rund connection} (formula below (12) in \cite{HPV19}). We also denote with a vertical bar $_{\hh}$ the index introduced by this covariant derivative to any anisotropic tensor, i.e., by analogy with \eqref{vertical derivatives},
\[
f_{\hh j}:=\delta_jf,\qquad\theta_{i\hh j}:=\delta_j\theta_i-\Gamma_{ji}^k\theta_k,\qquad\ldots
\]
Now, adopting the notation of \cite{CST,SY,Robles} for convenience, a $_{\hh 0}$ will represent the contraction with the Liouville field $\can$ of the new Chern index, i.e., 
\begin{equation}
	f_{\hh 0}:=f_{\hh j}\y^j=\y^j\delta_jf,\qquad\theta_{i\hh 0}:=\theta_{i\hh j}\y^j=\y^j\delta_j\theta_i-\y^j\Gamma_{ji}^k\theta_k,\qquad\ldots
	\label{dynamical}
\end{equation}
(In \cite{BucataruMiron,HPV19,HPV22}, the $_{\hh0}$ is called \emph{dynamical covariant derivative} and denoted by $\nabla$). Finally, we define the \emph{Landsberg tensor} $\la=\la_{ijk}\,\dd x^i\otimes\dd x^j\otimes\dd x^k$ by
\[
\la_{ijk}:=g_{ia}\left(G_{\vv j\vv k}^a-\Gamma_{jk}^a\right),
\]
which are symmetric, and the \emph{mean Landsberg tensor} $\text{tr}_g\la=\la_{i}\,\dd x^i$ by 
\[
\la_i:=\la_{ia}^a=g^{ab}\la_{iab}=G_{\vv a\vv i}^a-\Gamma_{ai}^a.
\]
The pseudo-Finsler metric $L$ is said to be \emph{weakly Landsberg} precisely when its mean Landsberg tensor vanishes, i.e., $\la_i=0$ in any natural coordinates.

\begin{rem} 
	Here we list some elementary properties of the Finslerian objects which we will use without further mention. The Chern derivative is Leibnizian for tensor products and commutes with contractions; consequently, the same is true of $_{\hh 0}$. The $_{\hh i}$ preserves the homogeneity degree of anisotropic tensors, while the $_{\hh 0}$ increases it by one. They satisfy $g_{ij\hh k}=0$, $g^{ij}_{\hh k}=0$ and, then, $g_{ij\hh 0}=0$, $g^{ij}_{\hh 0}=0$, $L_{\hh k}=0$, $L_{\hh 0}=0$. On the other hand, $g^{ij}_{\vv i}=-2\ca^j$ and $\y_iP^i=0$. The canonical field $\can$ is $1$-hom.; $\ca$ and $\text{tr}_g\ca$ are $(-1)$-hom; and $g$, $\omega$, $\la$, and $\text{tr}_g\la$ are $0$-hom.
\end{rem}

The \emph{($2$-homogeneous) Ricci scalar} of $L$ can be defined from its spray coefficients \eqref{spray}:
\begin{equation}
	\ri=2\frac{\partial G^i}{\partial x^i}-\y^j\frac{\partial^2G^i}{\partial x^j\partial\y^i}+2G^j\frac{\partial^2G^i}{\partial \y^j\partial\y^i}-\frac{\partial G^i}{\partial\y^j}\frac{\partial G^j}{\partial\y^i}\in\mathcal{F}(A).
	\label{ricci}
\end{equation}

\begin{rem}
	Note that in this article the Ricci scalar has the same sign as in \cite{Shen} and \cite{JSV}, and it is the opposite of the  scalar of \cite{HPV19}, there denoted $R$. Meanwhile, we have mantained the sign convention of the (mean) Landsberg tensor of \cite{HPV19}, entailing that $P_{ijk}$ (resp. $P_i$) is the opposite of the $\text{Lan}_{ijk}$ (resp. $\text{Lan}_{i}$) of \cite{JSV}. 
\end{rem}

\begin{defi}
	A pseudo-Finsler manifold $(M,L)$ is \emph{Einstein} if its \emph{Ricci curvature}, namely the function defined on $A_{\ast}:=\left\{(x,\y)\in A\colon L(x,\y)\neq 0\right\}$ by $\frac{\ri(x,\y)}{L(x,\y)}$, is actually independent of $\y$. Equivalently,\footnote{This is since each $A_{\ast}\cap \TT_xM$ is dense in $A\cap \TT_xM$ (otherwise the non-degeneracy of $g$ would be contradicted). Anyway, when the metric is Finsler, $A_{\ast}=A=\TT M\setminus\mathbf{0}$ and the Einstein condition is just $\ri=\f F^2$ there.} if there exists some $\f\in\mathcal{F}(M)$ such that $\ri=\f L$.
\end{defi}

\begin{rem}
	Going beyond the Riemannian theory \cite{Besse}, Akbar-Zadeh named \emph{generalized Einstein} those Finsler metrics whose Ricci curvature is isotropic \cite{AkbarZadeh}. Later, it became standard to call them just \emph{Einstein} \cite{Robles,BaoRobles,BacsoRezaei,CST,DengKerteszYan,SadeghzadehRazaviRezaei,SY,YuYou,ZhangXia}. Here, merely for convenience in the exposition, we employ this terminology also in the pseudo-Finsler case. In view of the different studies on Finslerian gravity, \cite{PW,HPV19,Pfeifer,JSV,HPV22}, one should keep in mind that this is in principle independent of the metric solving any proposed Finslerian Einstein equation.
\end{rem}

\subsection{Volume forms, divergences and fiber integrals}

The \emph{(positively) projectivized tangent bundle}\footnote{Consequently with our terminology on homogeneity, we tipically omit the word \emph{positively} since we will not work with the \emph{absolutely projectivized bundle}.} is one of the spaces which clasically have been used as base manifold for the Finslerian geometric objects (see \cite[Ch. 8]{CCL} or \cite[Ch. 2]{BaoChernShen}, where it is called the \emph{projective sphere bundle}). In \cite{HPV19}, it was recognized as a convenient setting for variational Finslerian gravity theories, which inspire our developments, and a detailed construction was given. Here we shall employ the notation of \cite{JSV}, so the canonical projection to the projectivized bundle is
\[
\qquad\;\;\begin{split}
	\mathbb{P}^+\colon \TT M\setminus\mathbf{0}&\longrightarrow\mathbb{P}^+\TT M, \\
	 (x,\y)&\longmapsto\mathbb{P}^+(x,\y)=(x,\mathbb{P}^+\y):=(x,\left\{\mu\y\colon \mu\in\left]0,+\infty\right[\right\}).
\end{split}
\]
Moreover, in this article we abbreviate
\[
\left(\TT M\right)^+:=\mathbb{P}^+\TT M
\]
and its fiber at $x\in M$ by 
\[
\left(\TT_xM\right)^+:=\mathbb{P}^+\TT_xM\subset\left(\TT M\right)^+.
\]
Now we follow the studies in \cite{HPV22,JSV} of how differential forms and divergences on $\left(\TT M\right)^+$ are induced by homogeneous forms and divergences on $\TT M\setminus\mathbf{0}$. Given $F\in\text{Fins}(M^n)$, the \emph{$\left(\TT M\right)^+$-volume form associated with $F$} can be defined as the projectivized volume form provided by the Sasaki metric. More precisely, it is the unique $(2n-1)$-form $\Xi$ such that
\begin{equation}
	\left(\mathbb{P}^+\right)^\ast \Xi=\frac{\det\left(g_{ab}\right)}{F^n}\,\dd^{(n)} x\wedge \imath_{\can^\VV}(\dd^{(n)}\y)
	\label{vol}
\end{equation}
 (as elements of $\varOmega(\TT M\setminus\mathbf{0})$; the $^\ast$ represents a pullback and
\[
\dd^{(n)} x:=\dd x^1\wedge\ldots\wedge\dd x^n,\qquad\imath_{\can^\VV}(\dd^{(n)}\y)=\sum_{i=1}^{n}\left(-1\right)^{i-1}y^i\bigwedge_{j\neq i}\dd \y^j).
\]
 We denote\footnote{Here we find the notation $d\vol$ more convenient than the $dV_0^+$ and $\underline{d\mu}$ appearing in \cite{HPV19,JSV} resp.} 
\[
\Xi=:d\vol\in\varOmega_{2n-1}(\left(\TT M\right)^+).
\]
With respect to it, one defines the \emph{divergence} of any $\mathscr{X}=X^i\delta_i+Y^i\dot{\partial}_i\in\mathfrak{X}(\TT M\setminus\mathbf{0})$ such that the $X^i$ are $0$-hom. and the $Y^i$ are $1$-hom.:
\[
\di(\mathscr{X})\in\mathcal{F}(\left(\TT M\right)^+),\qquad\di(\mathscr{X})\,d\vol:=-\mathfrak{L}_{\mathscr{X}}(d\vol). 
\]
(where $\mathfrak{L}_{\mathscr{X}}$ is the Lie derivative along $\mathscr{X}$ regarded as a vector field on $\left(\TT M\right)^+$, see \cite[\S III]{HPV22} and \cite[Prop. 3]{JSV}). The following divergence formulas have appeared repeatedly in the literature \cite{ChSh08,PW,HPV19,HPV22,JSV}:
\begin{equation}
	\di(X^i\delta_i)=X^i_{\hh i}-P_iX^i,\qquad\di(u\y^i\delta_i)=u_{\hh 0},
	\label{hor divergence}
\end{equation}
\begin{equation}
	\di(Y^i\vd_i)=Y^i_{\vv i}+2C_iY^i-n\frac{\y_i}{F^2}Y^i,
	\label{ver divergence}
\end{equation}
where the homogeneity degrees of the components $X^i$ and $Y^i$ are $0$ and $1$ resp. and that of the function $u$ is $-1$.

Our last aim for this section is to be able to fiberwise integrate and average anisotropic tensor fields on $\left(\TT M\right)^+$. For this, given $F\in\text{Fins}(M^n)$ and a chart $(U,\left(x^i\right))$ for $M$, one locally splits $d\vol$: there exists a unique $\left(n-1\right)$-form $\Theta$ on $\left(\TT U\right)^+$ such that\footnote{This follows from \cite[Prop. 15]{HPV22}, as did the existence and uniqueness of $d\vol$.}
\[
\left(\mathbb{P}^+\right)^\ast \Theta=\frac{\det\left(g_{ij}\right)}{F^n}\imath_{\can^\VV}(\dd^{(n)}\y),\qquad\left.d\vol\right|_{\left(\TT U\right)^+}=\dd^{(n)} x\wedge\Theta.
\]
It will be convenient for us to abuse notation and put
\[
\Theta=:d\vol_x\in\varOmega_{n-1}(\left(\TT U\right)^+).
\]
Now let $f\in\mathcal{F}(\TT U\setminus\mathbf{0})$ be a $0$-homogeneous local function; equiv., $f\in\mathcal{F}(\left(\TT U\right)^+)$. For each fixed $x\in U$, the restriction $\left.f\right|_{\left(\TT_xM\right)^+}$ can be integrated against (the pullback to $\left(\TT_xM\right)^+$ of) $d\vol_x$; such \emph{fiber integral} will be represented by
\[
\int_{\left(\TT_xM\right)^+}f(x,\y)\,d\vol_x(\y).
\]
This way, the use of Fubini's theorem in coordinates allows one to write
\begin{equation}
	\int_{\left(\TT U\right)^+}f\,d\vol=\int_U\left(\int_{\left(\TT_xM\right)^+}f(x,\y)\,d\vol_x(\y)\right)\dd^{(n)}x.
	\label{fiberwise integration}
\end{equation}
However, when the chart is changed, $(U,\left(x^i\right))\rightsquigarrow(\widetilde{U},\left(\widetilde{x}^i\right))$, the fiberwise integral does not provide a well-defined function on $U\cap\widetilde{U}$, but rather a density. Indeed, the transformation law for the fiber volume form is
\[
d\vol_x\rightsquigarrow\widetilde{d\vol_x}=\det\left(\frac{\partial x^i}{\partial \widetilde{x}^j}\right)d\vol_x,
\]
and hence
\[
\int_{\left(\TT_xM\right)^+}f(x,\y)\,\widetilde{d\vol_x}(\y)=\det\left(\frac{\partial x^i}{\partial \widetilde{x}^j}(x)\right)\int_{\left(\TT_xM\right)^+}f(x,\y)\,d\vol_x(\y).
\]
This is fixed by instead considering the \emph{fiberwise average of $f$}, namely
\[
\langle f\rangle\in\mathcal{F}(U),\qquad\langle f\rangle(x):=\frac{\int_{\left(\TT_xM\right)^+}\,f(x,\y)\,d\vol_x(\y)}{\int_{\left(\TT_xM\right)^+}\,d\vol_x(\y)},
\]
so that if $f\in\mathcal{F}(\left(\TT M\right)^+)$, then clearly $\langle f\rangle\in\mathcal{F}(M)$. In particular, all this applies to the local components of any $0$-hom. anisotropic tensor defined on $\TT M\setminus\mathbf{0}$, whose fiberwise averages will turn out to transform as a tensor on $M$. For example, if $\theta=\theta_i\,\dd x^i$ is a $0$-homogeneous $1$-form defined on $\TT M\setminus\mathbf{0}$, then the \emph{fiberwise average of $\theta$} is $\langle\theta\rangle\in\varOmega_1(M)$ defined by
\[
\langle\theta\rangle=\langle\theta\rangle_i\,\dd x^i,\qquad \langle\theta\rangle_i(x):=\langle\theta_i\rangle(x)=\frac{\int_{\left(\TT_xM\right)^+}\,\theta_i (x,\y)\,d\vol_x(\y)}{\int_{\left(\TT_xM\right)^+}\,d\vol_x(\y)}.
\]

\begin{rem}
	Keep in mind that each $F\in\text{Fins}(M)$ provides a natural isomorphism of its indicatrix bundle $\left\{(x,\y)\in \TT M\setminus\mathbf{0}\colon F(x,\y)=1\right\}$ with $\left(\TT M\right)^+$ and this is compatible with (fiber) integration \cite[\S III B 2]{HPV22}. Thus, even though we work on $\left(\TT M\right)^+$ for convenience, it is sensible to regard $\langle\theta\rangle$ as the \emph{average of $\theta$ over indicatrices of $F$}. It also becomes natural to extend the notion of \emph{fiberwise average} to the case in which $\theta$ is homogeneous of an arbitrary degree $r\in\mathbb{R}$, by defining it just as $\langle F^{-r}\theta\rangle$. This way, in the end our results can be thought of in the form of \eqref{integral identity}.
\end{rem}

\begin{rem}
	Despite the notation, our averaging procedure is distinct from others in the literature \cite{GT,MT,Szabo}.
\end{rem}

\section{$\mathrm{Diff}(M)$-invariance of Finslerian functionals} \label{diff invariance}

 As our main result will be a consequence of the diffeomorphism invariance of certain functionals, we find illustrative to start with a brief account of the analogous classical development for the Einstein-Hilbert action. For this, we assume that $M$ is oriented\footnote{Only momentarily and just for simplicity. The manifolds which we will work with later, $\TT M$ and $\left(\TT M\right)^+$, are always orientable.} and follow \cite[\S 4.2]{Berts}, though with our own notation. Let $\lambda$ be the \emph{pseudo-Riemannian Hilbert Lagrangian}, defined by $\lambda[g]:=g^{ij}\text{ric}_{ij}\,d\text{Vol}$. Let $U\subseteq M$ be a precompact open subset and denote $\mathscr{F}^U[g]:=\int_U\lambda[g]$. For any variation of $g$ with variational field $h=h_{ij}\,\dd x^i\otimes\dd x^j$, it is well known that 
\[
\left.\frac{\dd}{\dd t}\mathscr{F}^U[g_t]\right|_{t=0}=-\int_U\left(\text{ric}^{ij}-\frac{1}{2}g^{ab}\text{ric}_{ab}\,g^{ij}\right)h_{ij}\,d\text{Vol}.
\]
 At no point will we require $g$ to be critical for $\mathscr{F}^U$, but, regardless, there is a subset of variations for which this derivative is always $0$. Indeed, for an orientation-preserving $\varphi\in\text{Diff}(M)$, the transformation laws associated with the pullback metric $\varphi^\ast g$ imply that $\lambda$ is diffeomorphism \emph{equivariant}: $\lambda[\varphi^\ast g]=\varphi^\ast\lambda[g]$. In order to see that $\mathscr{F}^U$ is diffeomorphism \emph{invariant}, one restricts to those $\varphi$'s whose support,
 \[
 \text{Supp}\,\varphi:=\overline{\left\{x\in M\colon\,\varphi(x)\neq x\right\}},
 \]
 is contained in $U$, for then $\varphi\colon U\rightarrow U$ and the change of variables theorem gives $\mathscr{F}^U[\varphi^\ast g]=\mathscr{F}^U[g]$. Now one only needs to take a $\xi\in\mathfrak{X}(M)$ with $\text{Supp}\,\xi\subset U$ and consider the variation given by its flow $\varphi_t$, for which $g_t:=\varphi_t^\ast g$ and $h_{ij}=\nabla_j\xi_i+\nabla_i\xi_j$, obtaining with an integration by parts
 \begin{equation}
 	\begin{split}
 		0=\left.\frac{\dd}{\dd t}\mathscr{F}^U[\varphi_t^\ast g]\right|_{t=0}&=-2\int_U\left(\text{ric}^{ij}-\frac{1}{2}g^{ab}\text{ric}_{ab}\,g^{ij}\right)\nabla_j\xi_i\,d\text{Vol} \\
 		&=2\int_U\nabla_j\left(\text{ric}^{ij}-\frac{1}{2}g^{ab}\text{ric}_{ab}\,g^{ij}\right)\xi_i\,d\text{Vol}.
 	\end{split}
 \label{classical identity 0}
 \end{equation}
  The arbitrariness of $\xi$ and $U$ allows to conclude that, whatever the pseudo-Riemannian metric $g$ may be,
  \begin{equation}
  	\nabla_j\left(\text{ric}^{ji}-\frac{1}{2}g^{ab}\text{ric}_{ab}\,g^{ji}\right)=0.
  	\label{classical identity}
  \end{equation}    

 \begin{rem}
 	 This derivation and its Finslerian counterpart, which we develop in this section, can also be viewed as an instance of Noether's second theorem \cite{Noether}. In relation to the $\text{Diff}(M)$ symmetry enabling it, a number of nomenclatures appear in the literature, such as \emph{naturalness} (\cite[App. 3 a]{HPV22}, \cite{Krupka},\cite{KMS}), \emph{general invariance} in the sense of \cite{KT}, or some of the notions of \emph{general covariance} \cite{CG,Anderson,Norton} in physics. Furthermore, distinctions are made between the (ultimately equivalent) \emph{passive} and \emph{active} covariances, see \cite[Ch. 2.5]{Fatibene}. In our treatment, for simplicity, we choose to stick to the terms \emph{diff-equivariance/invariance} and to an active viewpoint. 
 \end{rem} 

We shall prove two lemmas (Lem. \ref{first} and Lem. \ref{second}). It must be stressed that the first of them is already implicit in \cite{HPV19,HPV22}: it could be obtained by formally applying \cite[Th. 31 1.]{HPV22} to the natural Lagrangian \eqref{fhlagrangian}. Nevertheless, we have opted for a self-contained development with the intention of focusing on how the statement and proof must be adapted in the standard Finsler case (Rem. \ref{integration domains}) and arbitrary dimension (Rem. \ref{dim n}). 

 \subsection{First  lemma} 

Some preliminaries are in order. We define the \emph{Finslerian Hilbert Lagrangian} as
\begin{equation}
	\Lambda\colon\text{Fins}(M)\longrightarrow\varOmega_{2n-1}(\left(\TT M\right)^+),\qquad\Lambda[F]=\frac{\ri}{F^2}\,d\vol,
	\label{fhlagrangian} 
\end{equation}
 and, relative to a precompact open subset $D^+\subseteq\left(\TT M\right)^+$, 
 the \emph{Finslerian Hilbert functional} as 
\begin{equation}
	\action^{D^+}\colon\text{Fins}(M)\longrightarrow\mathbb{R},\qquad\action^{D^+}[F]=\int_{D^+}\Lambda[F].
	\label{action}
\end{equation}
 Its variational calculus has already been studied in\footnote{There, some arguments for considering $\action$ as an appropriate generalization of the pseudo-Riemannian functional $\mathscr{F}$ are outlined. One is that (up to a multiplicative constant) $\action^{D^+}$ is variationally equivalent to $\int_{D^+}g^{ij}\ri_{\vv i\vv j}\,d\vol$, which, for Riemannian metrics and $D^+=\left(\TT U\right)^+$, in fact reduces to $\mathscr{F}^U$ upon fiberwise integration \cite[Prop. 6]{HPV19}, \cite[Props. 3.4 and 3.5]{JSV}.} \cite{ChSh08,PW,HPV19,HPV22,JSV}. Here we focus on those variations induced by the action of $\text{Diff}(M)$, which can be regarded as a subgroup of $\text{Diff}(\left(\TT M\right)^+)$ due to the naturalness of $\left(\TT M\right)^+$ over $M$. To be precise, for $\varphi\in\text{Diff}(M)$, we define, respectively, its \emph{lift to $\TT M$} and its \emph{lift to $\left(\TT M\right)^+$} as the maps $\varPhi\in\text{Diff}(\TT M)$ and $\varPhi^+\in\text{Diff}(\left(\TT M\right)^+)$ given by 
 \[
 \varPhi(x,\y):=(\varphi(x),\dd\varphi_x(\y)),\qquad\varPhi^+(x,\mathbb{P}^+\y):=(\varphi(x),\mathbb{P}^+(\dd\varphi_x(\y))),
 \]
  for $x\in M$ and $\y\in\TT_x M$. This way, one has natural commutative diagrams
	\[
	\xymatrix{  \TT M \ar[r]^{\varPhi} \ar[d]_{\mathbb{P}^+} & \TT M \ar[d]\ar[d]^{\mathbb{P}^+} \\ \left(\TT M\right)^+ \ar[r]^{\varPhi^+} \ar[d]_{\pi^+} & \left(\TT M\right)^+ \ar[d]^{\pi^+} \\
	M\ar[r]^{\varphi} & M}
	\]
  in which the horizontal arrows are all diffeomorphisms. 

 This provides pullbacks of all differential forms on $\TT M$ and $\left(\TT M\right)^+$. In particular, we can understand the pullback by $\varphi$ of any $f\in\mathcal{F}(\TT M\setminus\mathbf{0})$ to be $\varPhi^\ast f=f\circ\varPhi$, and in the event that $f$ is $0$-hom. and identified with an $f\in\mathcal{F}(\left(\TT M\right)^+)$, this coincides with $\left(\varPhi^+\right)^\ast f=f\circ\varPhi^+$. If $f=F\in\text{Fins}(M)$, then $\varPhi^\ast F$ also is in $\text{Fins}(M)$ and $\varphi$ is an \emph{isometry} between the Finsler manifolds $(M,\varPhi^\ast F)$ and $(M,F)$. Naturally, the geometric objects of two the isometric manifolds are related by the lifts $\varPhi$ and $\varPhi^+$ in a consistent manner, implying that the Finslerian Hilbert Lagrangian \eqref{fhlagrangian} is \emph{$\mathrm{Diff}(M)$-equivariant}: 
 \begin{equation}
 	\Lambda[\varPhi^\ast F]=\left(\varPhi^+\right)^\ast\Lambda[F].
 	\label{lagrangian equivariance}
 \end{equation}

On the other hand, given the flow $\varphi_t$ generated by some $\xi\in\mathfrak{X}(M)$, the corresponding variational field is\footnote{We follow \cite[Prop. 7]{HPV19}, adopting its notation for the variational field, and use the facts that $\y^i_{\hh j}=0$ and $F_{\hh j}=0$.} 
\begin{equation}
	v:=\frac{1}{2}\left.\frac{\dd}{\dd t}(\varPhi_t^\ast F^2)\right|_{t=0}=\xi_{i\hh j}\y^i\y^j=\left(y_i\xi^i\right)_{\hh 0},
	\label{var field 0}
\end{equation}
which induces 
\begin{equation}
	\frac{v}{F^2}=\left(\frac{\y_i\xi^i}{F^2}\right)_{\hh 0}\in\mathcal{F}(\left(\TT M\right)^+).
	\label{var field}
\end{equation}
The variational formulas of \cite{ChSh08,HPV19,HPV22} are valid as long as one works on an integration domain $D^+$ containing $\text{Supp}(\frac{v}{F^2})$ in its interior, all while the Lagrangian being smooth on the compact $\overline{D^+}\subset \left(\TT M\right)^+$.

\begin{rem} \label{integration domains}
	The only domains that we shall use are those of the form  $D^+=\left(\TT U\right)^+$ with $U\subseteq M$ precompact, open and with smooth boundary; these are well adapted to the above requirements. Indeed, if $\text{Supp}\,\xi\subset U$, then $\text{Supp}(\frac{v}{F^2})\subset\left(\TT U\right)^+$ and any boundary terms (recall \eqref{hor divergence} and \eqref{ver divergence}) depending on $\xi$ will automatically vanish: 
	\begin{equation}
		\quad\quad\int_{\left(\TT  U\right)^+}\di(Y^a[F,\xi]\vd_a)\,d\vol=0,\quad\int_{\left(\TT  U\right)^+}u[F,\xi]_{\hh 0}\,d\vol=0,\quad \ldots
		\label{vanishing boundary terms}
	\end{equation}
	Note that these integration domains are not available for general pseudo-Finsler metrics, which is the reason for having restricted ourselves to the standard Finsler case.\footnote{The formula \cite[(80)]{HPV19} was derived for $\text{Supp}(\frac{v}{L})$ arbitrarily small, which is incompatible with \eqref{var field}. Similarly, \cite[Th. 31]{HPV22} needs of a hypothesis on the support of the Lagrangian on each Lorentz-Finsler indicatrix which would trivialize the Schur theorem that we seek.}
\end{rem} 

\begin{rem} \label{dim n}
	Let us assume the conditions of Rem. \ref{integration domains} and take into account our conventions for $\ri$, $P$ and $v$, as well as $n=\dim M$. From \cite[(2.19)]{ChSh08}, one straightforwardly checks that the variation formula for \eqref{action} under variational fields \eqref{var field} reads
	\begin{equation}
		\begin{split}
			&\quad\left.\frac{\dd}{\dd t}\action^{\left(\TT U\right)^+}[F_t]\right|_{t=0} \\ &=\int_{\left(\TT U\right)^+}\left\{g^{ab}\ri_{\vv a\vv b}-\left(n+2\right)\frac{\ri}{F^2}+2g^{ab}\left(\la_{a\hh b}-\la_a\la_b+\la_{a\hh 0\,\vv b}\right)\right\}\frac{v}{F^2}\,d\vol.
		\end{split}
		\label{var formula}
	\end{equation}
\end{rem}

The equivariance \eqref{lagrangian equivariance} only holds for diffeomorphisms coming from the base manifold $M$; therefore, \eqref{var formula} is only informative when applied with vector fields of components $\xi^i(x)$. It will lead to an equality of the form  
\[
0=\int_U\left(\int_{\left(\TT_xM\right)^+} \ldots\right)_i\xi^i(x)\,\dd^{(n)}x,
\] 
which, in contrast to \eqref{classical identity 0} and \eqref{classical identity}, will produce an integral identity on each fiber of $\left(\TT M\right)^+$ (cf. \cite[Rem. 32]{HPV22}).

\begin{lemma} \label{first}
	Given $D^+=\left(\TT U\right)^+$ as in Rem. \ref{integration domains}, the Finslerian Hilbert functional \eqref{action} is \emph{$\mathrm{Diff}(M)$-invariant}: for $F\in\text{Fins}(M)$ and $\varphi\in\text{Diff}(M)$ with lift $\varPhi\in\text{Diff}(\TT M)$ and $\text{Supp}\,\varphi\subset U$, one has
	\[
	\action^{\left(\TT U\right)^+}[\varPhi^\ast F]=\action^{\left(\TT U\right)^+}[F].
	\]
	Consequently, every Finsler metric has the pointwise property that
		\begin{equation}
			\begin{split}
				\quad&\quad\int_{\left(\TT _xM\right)^+}\left\{g^{ab}\ri_{\vv a\vv b}-\left(n+2\right)\frac{\ri}{F^2}\right\}_{\hh 0}(x,\y)\,\frac{\y_i}{F(x,\y)^2}\,d\vol_{x}(\y) \\
				\quad&=-2\int_{\left(\TT _xM\right)^+}g^{ab}(x,\y)\left(\la_{a\hh b}-\la_a\la_b+\la_{a\hh 0\,\vv b}\right)_{\hh 0}(x,\y)\,\frac{\y_i}{F(x,\y)^2}\,d\vol_{x}(\y) \\
			\end{split}
		\label{identity 1}
		\end{equation}
		 in coordinates around any $x\in M$.
\end{lemma}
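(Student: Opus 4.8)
The plan is to follow the classical template \eqref{classical identity 0}--\eqref{classical identity}, with the Chern/dynamical derivative $_{\hh 0}$ playing the role of $\nabla$. I would split the argument into three stages: establish the invariance, differentiate it along a flow, and extract a pointwise fiber identity by an integration by parts.

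First, for the invariance I would observe that the hypothesis $\text{Supp}\,\varphi\subset U$ forces $\varphi(U)=U$ (an injective self-map of $M$ that fixes $M\setminus U$ pointwise must preserve $U$), so the lift $\varPhi^+$ restricts to a diffeomorphism of $\left(\TT U\right)^+$ onto itself, the fibration $\pi^+$ being respected by lifting. Combining the equivariance \eqref{lagrangian equivariance} with the change-of-variables theorem applied to $\varPhi^+$ then yields
\[
\action^{\left(\TT U\right)^+}[\varPhi^\ast F]=\int_{\left(\TT U\right)^+}\Lambda[\varPhi^\ast F]=\int_{\left(\TT U\right)^+}\left(\varPhi^+\right)^\ast\Lambda[F]=\int_{\left(\TT U\right)^+}\Lambda[F]=\action^{\left(\TT U\right)^+}[F],
\]
which is the asserted invariance, exactly mirroring $\mathscr{F}^U[\varphi^\ast g]=\mathscr{F}^U[g]$ in the Riemannian case.

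Second, I would take $\xi\in\mathfrak{X}(M)$ with $\text{Supp}\,\xi\subset U$; its flow $\varphi_t$ keeps $\text{Supp}\,\varphi_t\subset U$, so the invariance gives $\action^{\left(\TT U\right)^+}[\varPhi_t^\ast F]\equiv\action^{\left(\TT U\right)^+}[F]$ and hence a vanishing derivative at $t=0$. Feeding the variational field \eqref{var field} into the variation formula \eqref{var formula} gives
\[
0=\int_{\left(\TT U\right)^+}E\cdot\left(\frac{\y_i\xi^i}{F^2}\right)_{\hh 0}\,d\vol,
\]
where $E$ abbreviates the bracketed factor $g^{ab}\ri_{\vv a\vv b}-(n+2)\ri/F^2+2g^{ab}(\la_{a\hh b}-\la_a\la_b+\la_{a\hh 0\,\vv b})$ of \eqref{var formula}. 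The crucial step, and the one I expect to be the main obstacle, is the fiber integration by parts transferring $_{\hh 0}$ off the variational field and onto $E$ --- the analogue of the passage from the first to the second line of \eqref{classical identity 0}. Writing $w:=\y_i\xi^i/F^2$, the Leibniz rule for $_{\hh 0}$ gives $E\,w_{\hh 0}=(Ew)_{\hh 0}-E_{\hh 0}\,w$. A homogeneity count shows that $E$ is $0$-homogeneous and $w$ is $(-1)$-homogeneous, so $Ew$ is $(-1)$-homogeneous and the second divergence formula in \eqref{hor divergence} applies: $(Ew)_{\hh 0}=\di\!\left((Ew)\y^i\delta_i\right)$. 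Since $\text{Supp}\,\xi\subset U$, the field $(Ew)\y^i\delta_i$ vanishes near $\partial\left(\TT U\right)^+$, so the vanishing boundary relation \eqref{vanishing boundary terms} kills $\int_{\left(\TT U\right)^+}(Ew)_{\hh 0}\,d\vol$, leaving
\[
0=-\int_{\left(\TT U\right)^+}E_{\hh 0}\,\frac{\y_i\xi^i}{F^2}\,d\vol.
\]
The care required here is purely in the bookkeeping: one must track the homogeneity degrees so that the correct divergence formula is invoked, and confirm that the compact support of $\xi$ genuinely annihilates the boundary contribution.

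Finally, since the $\xi^i$ are functions of $x$ alone, Fubini's theorem \eqref{fiberwise integration} recasts this as
\[
0=-\int_U\left(\int_{\left(\TT_xM\right)^+}E_{\hh 0}(x,\y)\,\frac{\y_i(x,\y)}{F(x,\y)^2}\,d\vol_x(\y)\right)\xi^i(x)\,\dd^{(n)}x.
\]
By the arbitrariness of $\xi$ and of the chart domain $U$, the fundamental lemma of the calculus of variations yields the pointwise vanishing of each inner fiber integral. Splitting $E_{\hh 0}$ into its $\ri$-part and its Landsberg part, and using $g^{ab}_{\hh 0}=0$ to pull $g^{ab}$ through the $_{\hh 0}$ in the latter, this is precisely \eqref{identity 1} once the Landsberg contribution is moved to the right-hand side.
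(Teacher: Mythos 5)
Your proposal is correct and follows essentially the same route as the paper: Diff$(M)$-equivariance plus change of variables for the invariance, then the flow of a compactly supported $\xi$ fed into \eqref{var formula}, integration by parts via \eqref{hor divergence} and \eqref{vanishing boundary terms}, and finally Fubini \eqref{fiberwise integration} with the fundamental lemma of the calculus of variations. Your explicit Leibniz-rule and homogeneity bookkeeping for the step $(Ew)_{\hh 0}=\di\left((Ew)\y^i\delta_i\right)$ merely spells out what the paper compresses into ``integrating by parts,'' and it is accurate.
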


\begin{proof}
	Let us parallel the classical derivation with which we started \S \ref{diff invariance}. The first claim follows from the $\mathrm{Diff}(M)$-equivariance \eqref{lagrangian equivariance} of the Finslerian Hilbert Lagrangian by using the change of variables theorem with the lift of $\varphi$ to $\left(\TT M\right)^+$, which satisfies that $\varPhi^+\colon\left(\TT U\right)^+\rightarrow\left(\TT U\right)^+$ due to the hypothesis on the support.
	
	For the second claim, take any $\xi\in\mathfrak{X}(M)$ with $\text{Supp}\,\xi\subset U$
	and represent by $\varphi_t$ its associated one-parameter group. Then $\varphi_t\in\text{Diff}(M)$ with $\text{Supp}\,\varphi_t\subset U$, so for the corresponding variation $F_t:=\varPhi_t^\ast F$ the left hand side of \eqref{var formula} is $0$. The variational field is \eqref{var field}, so integrating by parts in the resulting formula \eqref{var formula} (taking \eqref{vanishing boundary terms} into account) yields
	\begin{equation}
		\begin{split}
			&\quad\int_{\left(\TT U\right)^+}\left\{g^{ab}\ri_{\vv a\vv b}-\left(n+2\right)\frac{\ri}{L}\right\}_{\hh 0}\frac{\y_i\xi^i}{F^2}\,d\vol \\
			&=-2\int_{\left(\TT U\right)^+}g^{ab}\left(\la_{a\hh b}-\la_a\la_b+\la_{a\hh 0\,\vv b}\right)_{\hh 0}\frac{\y_i\xi^i}{F^2}\,d\vol.
		\end{split}
		\label{identity 0}
	\end{equation}
	The only thing that remains in order to get \eqref{identity 1} is to express \eqref{identity 0} in terms of fiber integrals \eqref{fiberwise integration} (assuming that $U$ is a coordinate domain) and then invoke the arbitrariness of $\xi$ and $U$.
\end{proof}

\subsection{Second lemma}

Now, if we just applied \eqref{identity 1} to the Einstein case, $\ri=\f F^2$ with $\f\in\mathcal{F}(M)$, we would get $\y^a\partial_a\f\,\frac{\y_i}{F^2}$ as the integrand on the left hand side, but we are interested in $\partial_i\rho$ itself. Our second lemma provides an expression of the differential of a function which might be of independent interest, and it will be used to rewrite said left hand side.

\begin{lemma} \label{second}
	The functional $\mathscr{I}\colon\mathcal{F}(M)\times\text{Fins}(M)\rightarrow\mathbb{R}$ defined by $\mathscr{I}[\f,F]:=\int \f\,d\vol$ (the integration domain being as in Rem. \ref{integration domains}) is $\text{Diff}(M)$-invariant: in the conditions of Lem. \ref{first},
	\[
	\mathscr{I}[\varphi^\ast \f,\varPhi^\ast F]=\mathscr{I}[\f,F].
	\]
	Consequently, on any Finsler manifold $(M,F)$ of dimension $n$, the differential of any function $\f\in\mathcal{F}(M)$ satisfies the identity
	\begin{equation}
		\partial_i\f(x)\int_{\left(\TT _xM\right)^+}\,d\vol_x(\y)=n\int_{\left(\TT _xM\right)^+}\y^a\partial_a\f(x)\frac{\y_i}{F(x,\y)^2}\,d\vol_{x}(\y) 
		\label{identity 2}
	\end{equation}
	in coordinates around any $x\in M$.
\end{lemma}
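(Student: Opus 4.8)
The plan is to mirror the two-part structure of the proof of Lemma \ref{first}. For the invariance, I would first observe that the integrand $\f\,d\vol$ is $\mathrm{Diff}(M)$-equivariant. Indeed, since $\f\in\mathcal{F}(M)$, its lift to $\left(\TT M\right)^+$ satisfies $\varphi^\ast\f=\left(\varPhi^+\right)^\ast\f$, and the Sasaki volume form obeys $d\vol[\varPhi^\ast F]=\left(\varPhi^+\right)^\ast d\vol[F]$ (the very equivariance underlying \eqref{lagrangian equivariance}); hence the product transforms as $\left(\varphi^\ast\f\right)d\vol[\varPhi^\ast F]=\left(\varPhi^+\right)^\ast\left(\f\,d\vol[F]\right)$. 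Restricting to $\varphi$ with $\text{Supp}\,\varphi\subset U$ forces $\varPhi^+\colon\left(\TT U\right)^+\rightarrow\left(\TT U\right)^+$, so the change of variables theorem immediately yields $\mathscr{I}[\varphi^\ast\f,\varPhi^\ast F]=\mathscr{I}[\f,F]$.

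To extract \eqref{identity 2}, I would take $\xi\in\mathfrak{X}(M)$ with $\text{Supp}\,\xi\subset U$, let $\varphi_t$ be its flow, and differentiate the (constant in $t$) quantity $\mathscr{I}[\varphi_t^\ast\f,\varPhi_t^\ast F]$ at $t=0$. In contrast to Lemma \ref{first}, here \emph{both} arguments move while the domain $\left(\TT U\right)^+$ stays fixed, so the derivative splits into two pieces. The first, from the varying scalar, is $\int_{\left(\TT U\right)^+}\left(\xi^a\partial_a\f\right)d\vol$, since $\frac{\dd}{\dd t}\varphi_t^\ast\f|_{t=0}=\xi^a\partial_a\f$. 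The second, from the varying volume form, requires $\frac{\dd}{\dd t}d\vol[\varPhi_t^\ast F]|_{t=0}$; reading off \eqref{vol} and using that, under the variational field \eqref{var field}, the fundamental tensor varies by $v_{\vv a\vv b}$ and $F$ by $v/F$, this equals $\left(g^{ab}v_{\vv a\vv b}-n\frac{v}{F^2}\right)d\vol$, consistently with the coefficient $-\left(n+2\right)$ appearing in \eqref{var formula}. Setting the total derivative to zero and invoking \eqref{fiberwise integration} produces a single fiber-integral identity.

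The remaining work is two integrations by parts on the fibers. First a vertical one: applying \eqref{ver divergence} to $Y^b:=\f\,g^{ab}v_{\vv a}$ (which is $1$-homogeneous because $v$ is $2$-homogeneous), the mean-Cartan contributions cancel against $g^{ab}_{\vv b}=-2\ca^a$, and Euler's theorem (Th. \ref{euler}, giving $\y^a v_{\vv a}=2v$) collapses $\int\f\,g^{ab}v_{\vv a\vv b}\,d\vol$ to a multiple of $\int\f\,\frac{v}{F^2}\,d\vol$; combined with the $-n\frac{v}{F^2}$ term this again leaves a multiple of $\int\f\,\frac{v}{F^2}\,d\vol$. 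Then a horizontal one: since $\frac{v}{F^2}=\left(\frac{\y_j\xi^j}{F^2}\right)_{\hh 0}=\di\!\left(\frac{\y_j\xi^j}{F^2}\y^i\delta_i\right)$ by \eqref{hor divergence} and the boundary term dies on $\left(\TT U\right)^+$, one transfers the $_{\hh 0}$ onto $\f$, using $\f_{\hh 0}=\y^i\partial_i\f$, to rewrite $\int\f\,\frac{v}{F^2}\,d\vol$ as $\int\left(\y^i\partial_i\f\right)\frac{\y_j\xi^j}{F^2}\,d\vol$. Collecting the two pieces, factoring out the arbitrary $\xi^a$, and letting $U$ shrink around a point localizes everything to the pointwise statement \eqref{identity 2}.

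The main obstacle I anticipate is the careful derivation of the volume-form variation together with the sign-and-weight bookkeeping of the two integrations by parts: it is precisely the cancellation of the $\ca^a$-terms in the vertical step and the appearance of the factor $n$ (forced by the $2$-homogeneity of $v$ through Euler's theorem) that make the clean coefficient of \eqref{identity 2} emerge. Everything else is a faithful transcription of the classical ``diff-invariance $\Rightarrow$ contracted identity'' argument to the fiber-integral setting.
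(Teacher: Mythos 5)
Your proposal follows the paper's own proof essentially step by step: invariance via equivariance of $\f\,d\vol$ plus change of variables with $\varPhi^+$; differentiation at $t=0$ splitting into the scalar variation $\int\xi^a\partial_a\f\,d\vol$ and the volume variation $\bigl(g^{ab}v_{\vv a\vv b}-n\frac{v}{F^2}\bigr)d\vol$; the vertical integration by parts on $Y^b=\f g^{ab}v_{\vv a}$ with the Cartan cancellation and Euler's theorem (which indeed produce the coefficient $2n$, hence $2n-n=n$ after combining); and the horizontal integration by parts followed by \eqref{fiberwise integration} and the arbitrariness of $\xi$. This is exactly the paper's argument.

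One sign slip should be fixed before the sketch compiles into a correct proof. Transferring the $_{\hh 0}$ onto $\f$ carries a minus sign: since $\bigl(\f\,\frac{\y_j\xi^j}{F^2}\bigr)_{\hh 0}=\di\bigl(\f\,\frac{\y_j\xi^j}{F^2}\,\y^i\delta_i\bigr)$ integrates to zero by \eqref{vanishing boundary terms}, one gets
\begin{equation*}
\int \f\,\frac{v}{F^2}\,d\vol=\int \f\left(\frac{\y_j\xi^j}{F^2}\right)_{\hh 0}d\vol=-\int \f_{\hh 0}\,\frac{\y_j\xi^j}{F^2}\,d\vol,
\end{equation*}
not the identity without the minus sign that you wrote. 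As literally stated, your collected identity would be $0=\int\xi^i\partial_i\f\,d\vol+n\int(\y^a\partial_a\f)\frac{\y_i\xi^i}{F^2}\,d\vol$, which yields \eqref{identity 2} with the wrong overall sign on the right-hand side. With the minus sign restored, your computation coincides with the paper's and gives \eqref{identity 2} exactly.
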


\begin{proof}
	The proof proceeds analogously to that of Lem. \ref{first}, so let $\xi\in\mathfrak{X}(M)$ be the corresponding generator of the one-parameter group $\varphi_t$ with lift $\varPhi_t$ and variational field $v$ given by \eqref{var field 0}. Using \eqref{vol}, one has
	\begin{equation}
		\begin{split}
			0&=\int \left.\partial_t\right|_0\left(\f\circ\varphi_t\right)d\vol \\
			&\quad+\int \f\frac{\left.\partial_t\right|_0\det\left(g_{ab}\circ\varPhi_t\right)}{\det\left(g_{ab}\right)}\,d\vol+\int \f\frac{\left.\partial_t\right|_0\left(F^{-n}\circ\varPhi_t\right)}{F^{-n}}\,d\vol \\
			&=\int\xi(\f)\,d\vol+\int \f g^{ab}v_{\vv a\vv b}\,d\vol-n\int \f\frac{v}{F^2}\,d\vol.
		\end{split}
		\label{identity 3}
	\end{equation}
	By a simple computation (using \eqref{ver divergence} and the $2$-hom. of $v$ in the third line),
	\[
	\begin{split}
		\f g^{ab}v_{\vv a\vv b}&=\left(\f g^{ab}v_{\vv a}\right)_{\vv b}-\f g^{ab}_{\vv b}v_{\vv a} \\
		&=\left(\f g^{ab}v_{\vv a}\right)_{\vv b}+2\f C^av_{\vv a} \\
		&=\di(\f g^{ba}v_{\vv a}\dot{\partial}_b)+n\frac{\y_b}{F^2}\f g^{ba}v_{\vv a}=\di(\f g^{ba}v_{\vv a}\dot{\partial}_b)+2n\f\frac{v}{F^2}.
	\end{split}
	\]
	Substituting this back into \eqref{identity 3} and neglecting all the boundary terms (recall \eqref{hor divergence}, \eqref{dynamical} and \eqref{fiberwise integration}),
	\[
	\begin{split}
		0&=\int\xi(\f)\,d\vol+2n\int \f\frac{v}{F^2}\,d\vol-n\int \f\frac{v}{F^2}\,d\vol \\
		&=\int\xi^i\partial_i \f\,d\vol+n\int \f\frac{\left(\y_i\xi^i\right)_{\hh 0}}{F^2}\,d\vol \\
		&=\int\xi^i\partial_i \f\,d\vol-n\int \f_{\hh 0}\frac{\y_i\xi^i}{F^2}\,d\vol \\
		&=\int\left\{\int\left(\partial_i \f(x)-n\y^a\partial_a\f(x)\frac{\y_i}{F(x,\y)^2}\right)d\vol_x(\y)\right\}\xi^i(x)\,\dd^{(n)}x.
	\end{split}	
	\]
	The arbitrariness of $\xi$ and of the coordinate domain allows one to conclude.
\end{proof}

\section{Main results} \label{main results}

\subsection{Schur theorem for weakly Landsberg Finsler metrics}

Our main theorem is a  representation of the differential of the Ricci curvature for any Einstein Finsler manifold (Th. \ref{main}). Since it will follow by particularizing the identities obtained in \S \ref{diff invariance}, let us compute the $\ri$-terms of \eqref{identity 1} in the Einstein case. By substituting $\ri=\f F^2$ there ($\f\in\mathcal{F}(M)$), one gets
	\[
	\begin{split}
		g^{ab}\ri_{\vv a\vv b}-\left(n+2\right)\frac{\ri}{F^2}=g^{ab}\left(\f F^2\right)_{\vv a\vv b}-\left(n+2\right)\f
		&=g^{ab}\left\{2\f g_{ab}\right\}-\left(n+2\right)\f \\
		&=2n\f-\left(n+2\right)\f \\
		&=\left(n-2\right)\f,
	\end{split}
	\]
	from where \eqref{dynamical} and \eqref{horizontal} yield
	\begin{equation}
		\left\{g^{ab}\ri_{\vv a\vv b}-\left(n+2\right)\frac{\ri}{F^2}\right\}_{\hh 0}=\left(n-2\right)\f_{\hh 0}=\left(n-2\right)\y^a\partial_a\f.
		\label{ric terms}
	\end{equation}

As a second step, let us express the integrand in the right hand side of \eqref{identity 1} in a more concise way. We denote 
\begin{equation}
	\PP:=g^{ij}\left(\la_{i\hh j}-\la_i\la_j+\la_{i\hh 0\,\vv j}\right)\in\mathcal{F}(\TT M\setminus\mathbf{0}),	
	\label{P terms = div}
\end{equation}
so that said integrand is $\PP_{\hh 0}\,\frac{\y_i}{F^2}$.

Finally, recall from \S \ref{prelims} the invariant notation for fiberwise averaging associated with the $\left(\TT M\right)^+$-volume form $d\vol$. Again, consider the splitting induced by a coordinate chart $(U,(x^i))$: omitting natural pullbacks, $d\vol=\dd^{(n)}x\wedge d\vol_x$ with $d\vol_x=\frac{\det\left(g_{ij}\right)}{F^n}\,\imath_{\can^\VV}(\dd^{(n)}\y)$. Let $\theta$ be an $r$-homogeneous anisotropic $1$-form defined on $\TT M\setminus\mathbf{0}$. Then the transformation laws of the components $\theta_i$ and of $d\vol_x$ under changes $\left(x^i\right)\rightsquigarrow\left(\widetilde{x}^i\right)$, imply that the fiberwise average
	\[
	\langle\theta\rangle=\langle\theta\rangle_i\,\dd x^i\in\varOmega(M),
	\]
	given (for any $x\in M$) by
	\begin{equation}
		\langle\theta\rangle_i(x):=\langle\theta_i\rangle(x)=\frac{\int_{\left(\TT _xM\right)^+}\,F(x,\y)^{-r}\theta_i (x,\y)\,d\vol_x(\y)}{\int_{\left(\TT _xM\right)^+}\,d\vol_x(\y)},
		\label{averaged form}
	\end{equation}
	is globally well-defined.

\begin{thm} \label{main}
	 Let $(M,F)$ be a Finsler manifold of dimension $n\geq 2$. Assume that $\ri=\f F^2$ with $\f\in\mathcal{F}(M)$. Then the Hilbert form $\omega$ and the function $\PP$ (defined by \eqref{hilbert form}  and \eqref{P terms = div} resp.) satisfy that
	 \[
	 \left(n-2\right)\dd\f=-2n\langle\PP_{\hh 0}\,\omega\rangle;
	 \]
	 equally, in coordinates around each $x\in M$, 
	\begin{equation}
		\left(n-2\right)\partial_i\f (x)=-2n\frac{ \int_{\left(\TT _xM\right)^+}\,F(x,\y)^{-2}\,\PP_{\hh 0}(x,\y)\,\y_i\,d\vol_x(\y) }{\int_{\left(\TT _xM\right)^+}\,d\vol_x(\y)}.
		\label{representation coords} 
	\end{equation}
\end{thm}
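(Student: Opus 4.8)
The plan is to assemble Theorem \ref{main} directly from three ingredients already in place: the pointwise identity \eqref{identity 1} of Lemma \ref{first}, the Einstein simplification \eqref{ric terms}, and the differential identity \eqref{identity 2} of Lemma \ref{second}. No fresh variational computation is needed; the theorem is a synthesis of these, so the work is essentially algebraic rearrangement together with a homogeneity check to pass from coordinates to the invariant statement.

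First I would substitute the Einstein hypothesis $\ri=\f F^2$ into the left-hand side of \eqref{identity 1}. By \eqref{ric terms}, the bracketed factor there collapses to $\left(n-2\right)\y^a\partial_a\f$, so that left-hand side becomes $\left(n-2\right)\int_{\left(\TT_xM\right)^+}\y^a\partial_a\f\,\frac{\y_i}{F^2}\,d\vol_x$. Simultaneously, the right-hand side of \eqref{identity 1} is, by the very definition \eqref{P terms = div}, exactly $-2\int_{\left(\TT_xM\right)^+}\PP_{\hh 0}\,\frac{\y_i}{F^2}\,d\vol_x$. Hence \eqref{identity 1} now reads $\left(n-2\right)\int\y^a\partial_a\f\,\frac{\y_i}{F^2}\,d\vol_x=-2\int\PP_{\hh 0}\,\frac{\y_i}{F^2}\,d\vol_x$ at each $x$.

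Next I would remove the fiber integral of $\y^a\partial_a\f\,\frac{\y_i}{F^2}$ on the left by invoking Lemma \ref{second}: the identity \eqref{identity 2} gives $\int_{\left(\TT_xM\right)^+}\y^a\partial_a\f\,\frac{\y_i}{F^2}\,d\vol_x=\frac{1}{n}\partial_i\f\int_{\left(\TT_xM\right)^+}d\vol_x$. Substituting this and solving for $\partial_i\f$ yields $\left(n-2\right)\partial_i\f=-2n\,\frac{\int\PP_{\hh 0}\,\y_i\,F^{-2}\,d\vol_x}{\int d\vol_x}$, which is precisely \eqref{representation coords}. To recover the coordinate-free form $\left(n-2\right)\dd\f=-2n\langle\PP_{\hh 0}\,\omega\rangle$, I would track homogeneity: $\PP$ is $0$-homogeneous, so $\PP_{\hh 0}$ is $1$-homogeneous, and since $\omega_i=\y_i/F$ is $0$-homogeneous, the $1$-form $\PP_{\hh 0}\,\omega$ is $1$-homogeneous. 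Applying the weighted average \eqref{averaged form} with $r=1$ converts its weight into $F^{-1}\omega_i=F^{-2}\y_i$, so $\langle\PP_{\hh 0}\,\omega\rangle_i$ matches the right-hand side of \eqref{representation coords} term for term; the fact (from \S\ref{prelims}) that such fiberwise averages of a $0$-homogeneous form's components patch into a genuine global $1$-form on $M$ then promotes \eqref{representation coords} to the invariant identity.

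Since the statement is a recombination of established identities, there is no genuinely hard step; the only real delicacy is \emph{bookkeeping} — carrying the constants $\left(n-2\right)$, $n$ and $-2$ faithfully through the substitutions, and, more subtly, verifying the homogeneity degree of $\PP_{\hh 0}$ so that the $F^{-r}$ normalization built into \eqref{averaged form} reproduces exactly the $F^{-2}$ factor of the coordinate formula, thereby certifying that the two displayed forms of the theorem genuinely coincide.
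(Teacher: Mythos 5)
Your proposal is correct and follows essentially the same route as the paper's own proof: substitute the Einstein condition into \eqref{identity 1} via \eqref{ric terms}, identify the right-hand side with $-2\int\PP_{\hh 0}\,\y_i F^{-2}\,d\vol_x$ through \eqref{P terms = div}, eliminate the fiber integral of $\y^a\partial_a\f\,\y_i F^{-2}$ using \eqref{identity 2}, and finish with the homogeneity check ($\PP_{\hh 0}\,\omega$ of degree $r=1$) that equates \eqref{representation coords} with $-2n\langle\PP_{\hh 0}\,\omega\rangle$. Your bookkeeping of the constants and of the $F^{-r}$ normalization in \eqref{averaged form} matches the paper exactly.
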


\begin{proof}
	In this situation, the conclusions of Lems. \ref{first} and \ref{second} are valid. Taking \eqref{ric terms} and \eqref{P terms = div} into account, the identity \eqref{identity 1} becomes 
	\[
	\begin{split}
		&\left(n-2\right)\int_{\left(\TT _xM\right)^+}\y^a\partial_a\f(x)\frac{\y_i}{F(x,\y)^2}\,d\vol_{x}(\y) \\
		=&-2\int_{\left(\TT _xM\right)^+}\PP_{\hh 0}(x,\y)\frac{\y_i}{F(x,\y)^2}\,d\vol_{x}(\y).
	\end{split}
	\]
	Putting this together with \eqref{identity 2} directly yields \eqref{representation coords}, whose right hand side is exactly $-2n\langle\PP_{\hh 0}\,\omega\rangle_i(x)$ (see \eqref{averaged form} and recall that the $1$-form of components $\PP_{\hh 0}\,\omega_i=\PP_{\hh 0}\,\frac{\y_i}{F}$ is homogeneous of degree $r=1$).
\end{proof}

Thus, on an Einstein Finsler manifold of dimension $3$ or greater, the differential of the Ricci curvature is proportional to $\langle\PP_{\hh 0}\,\omega\rangle$, whereas in dimension $2$ this averaged invariant vanishes. By computing the explicit expression of $\PP_{\hh 0}$ from \eqref{P terms = div}, one obtains the announced Schur theorem under the hypothesis that a certain combination of derivatives of the $\la_i$'s is $0$. 

\begin{cor} \label{schur}
	Let $(M^n,F)$ be a connected Finsler manifold with vanishing mean Landsberg tensor $P_i\,\dd x^i$ or, with more generality,
	\[
	g^{ij}\left(\la_{i\hh j\hh0}-2\la_i\la_{j\hh 0}-\la_{i\hh 0\,\vv j\hh 0}\right)=0.
	\]
	If $\ri=\f F^2$ with $\f\in\mathcal{F}(M)$ and $n\geq 3$, then $\f$ is constant.
\end{cor}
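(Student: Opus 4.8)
The statement follows from Theorem \ref{main}. As $n\geq 3$, the factor $n-2$ is nonzero, so the identity $(n-2)\dd\f=-2n\langle\PP_{\hh 0}\,\omega\rangle$ reduces everything to showing that, under the hypothesis, the averaged $1$-form $\langle\PP_{\hh 0}\,\omega\rangle$ vanishes; then $\dd\f=0$ and, $M$ being connected, $\f$ is constant. Concretely, I must show that the fibre integral $\int_{\left(\TT_xM\right)^+}F^{-2}\PP_{\hh 0}\,\y_k\,d\vol_x$ is zero for every $x$ and every $k$. The first step is to expand $\PP_{\hh 0}$ from \eqref{P terms = div}: since $_{\hh 0}$ is Leibnizian, commutes with contractions and kills $g^{ij}$, and since $g^{ij}(\la_i\la_j)_{\hh 0}=2g^{ij}\la_i\la_{j\hh 0}$, one finds
\[
\PP_{\hh 0}=g^{ij}\left(\la_{i\hh j\hh 0}-2\la_i\la_{j\hh 0}+\la_{i\hh 0\,\vv j\hh 0}\right).
\]
This agrees with the combination in the hypothesis except for the sign of the vertically differentiated term $g^{ij}\la_{i\hh 0\,\vv j\hh 0}$.

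That sign is exactly what a boundaryless integration by parts on the fibre produces. Each $\left(\TT_xM\right)^+$ is compact without boundary, so by \eqref{ver divergence} the fibre integral of any vertical divergence $\di(Y^j\vd_j)$ vanishes. I would first record that $g^{ij}\la_{i\hh 0\,\vv j}=\di(\la^j_{\hh 0}\vd_j)$ is itself such a divergence---this uses $g^{ij}_{\vv j}=-2\ca^i$ and $\y^i\la_{i\hh 0}=0$ (the latter from $\y^i\la_i=0$ and $\y^i_{\hh 0}=0$)---so that $g^{ij}\la_{i\hh 0\,\vv j\hh 0}=\big[\di(\la^j_{\hh 0}\vd_j)\big]_{\hh 0}$. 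To integrate this against $\y_k/F^2$ over the fibre I would bring the vertical derivative $\vd_j$ to the outside, commuting it past the dynamical derivative $_{\hh 0}$, and integrate by parts. The homogeneity and algebraic identities $\y^i\la_i=\y^i\la_{i\hh 0}=\y^i\la_{i\hh 0\hh 0}=0$, $g^{ij}_{\vv j}=-2\ca^i$, $g^{ij}_{\hh 0}=0$ and $\y_{i\hh 0}=0$ produce substantial cancellations among the Cartan-tensor and $\y_j/F^2$ corrections coming from \eqref{ver divergence}, and what remains is the same term with reversed sign together with the contribution of the commutator between the vertical derivative $\vd_j$ and the dynamical derivative $_{\hh 0}$. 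Organising the latter by the Chern--Rund commutation (Ricci) identities, the identity I aim to reach is
\[
\langle\PP_{\hh 0}\,\omega\rangle=\big\langle g^{ij}\left(\la_{i\hh j\hh 0}-2\la_i\la_{j\hh 0}-\la_{i\hh 0\,\vv j\hh 0}\right)\omega\big\rangle,
\]
valid on any Finsler manifold.

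Granting this, the conclusion is immediate: under the hypothesis the integrand on the right is identically zero, hence $\langle\PP_{\hh 0}\,\omega\rangle=0$ and $\dd\f=0$, so $\f$ is constant by connectedness; the weakly Landsberg case $\la_i=0$ is the trivial subcase in which every term already vanishes. I expect the only real difficulty to be in the middle paragraph. Commuting $\vd_j$ past $_{\hh 0}$ generates connection and Landsberg terms through the commutation formulas of the Chern--Rund connection, and the crux is to verify that, after the integration by parts on the closed fibre, all of these recombine precisely into the hypothesis combination with no leftover terms; the identities $\y^i\la_i=0$ and $g^{ij}_{\vv j}=-2\ca^i$ are what drive the required cancellations.
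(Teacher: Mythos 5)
Your opening paragraph is correct, and it is in fact the entirety of the paper's own proof. Since the Chern derivative is Leibnizian, commutes with contractions and annihilates $g^{ij}$, the definition \eqref{P terms = div} expands to
\[
\PP_{\hh 0}=g^{ij}\left(\la_{i\hh j\hh 0}-2\la_i\la_{j\hh 0}+\la_{i\hh 0\,\vv j\hh 0}\right),
\]
and the paper reads the corollary's displayed hypothesis as precisely the vanishing of this quantity: then $\langle\PP_{\hh 0}\,\omega\rangle=0$ holds trivially (the integrand vanishes pointwise), Th. \ref{main} gives $\left(n-2\right)\dd\f=0$, and connectedness with $n\geq 3$ finishes. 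The minus sign in front of $\la_{i\hh 0\,\vv j\hh 0}$ in the printed statement is inconsistent with \eqref{P terms = div}, \eqref{var formula} and \eqref{identity 1}, all of which carry $+\la_{a\hh 0\,\vv b}$; it is best read as a typo for $+$, and the weakly Landsberg case is of course insensitive to it. In particular, no integration by parts over the fibre occurs in the paper's argument.

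The genuine gap is your middle paragraph, where you instead take the printed sign at face value and try to close the discrepancy by a fibrewise integration by parts. Your claimed bridge identity is equivalent to $\langle g^{ij}\la_{i\hh 0\,\vv j\hh 0}\,\omega\rangle=0$ holding on every Finsler manifold, and it is never proved; carrying out the plan you sketch shows that this is not what comes out. The commutation rule for a $1$-form $\theta$ (derived from \eqref{horizontal} and $\y^j\Gamma^a_{ji}=G^a_{\vv i}$) reads $\theta_{i\hh 0\,\vv j}=\theta_{i\vv j\hh 0}+\theta_{i\hh j}-\la^a_{ij}\theta_a$; applied to $\theta_i=\la_{i\hh 0}$ and contracted with $g^{ij}$ it gives
\[
g^{ij}\la_{i\hh 0\,\vv j\hh 0}=g^{ij}\la_{i\hh 0\hh 0\,\vv j}-g^{ij}\la_{i\hh 0\hh j}+\la^a\la_{a\hh 0},
\]
and integrating the first term by parts on the compact fibre via \eqref{ver divergence} (with the $1$-homogeneous field $Y^j=F^{-2}\y_k\,g^{ij}\la_{i\hh 0\hh 0}$, using $\y^i\la_{i\hh 0\hh 0}=0$, $g^{ij}_{\vv j}=-2\ca^i$ and $\vd_j\y_k=g_{jk}$) yields
\[
\int_{\left(\TT _xM\right)^+}F^{-2}\,\y_k\,g^{ij}\la_{i\hh 0\hh 0\,\vv j}\,d\vol_x=-\int_{\left(\TT _xM\right)^+}F^{-2}\,\la_{k\hh 0\hh 0}\,d\vol_x.
\]
So the integration by parts does not return ``the same term with reversed sign'': what remains under the integral is $-\la_{k\hh 0\hh 0}-g^{ij}\la_{i\hh 0\hh j}\,\y_k+\la^a\la_{a\hh 0}\,\y_k$, which contains second horizontal derivatives of $\la_i$ of a different tensorial structure (in the first term the free index sits on $\la$, not on $\y_k$). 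These cannot be removed by further fibrewise integration by parts, because horizontal derivatives are not tangent to the fibre, and there is no reason for them to cancel on a general Finsler manifold. Your proposal becomes a complete proof, identical to the paper's, if you keep your first paragraph, read the hypothesis with the $+$ sign (i.e.\ as $\PP_{\hh 0}=0$), and pass directly to your final paragraph, discarding the middle step altogether.
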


\subsection{Schur theorem for $\mathrm{Ric}$-quadratic pseudo-Finsler metrics}

The method employed for deriving Cor. \ref{schur} cannot be extended to the Lorentz-Finsler or non-standard Finsler cases, not even if the metric is Landsberg or Berwald, see Rem. \ref{integration domains}. Independently, in \cite{DengKerteszYan,SadeghzadehRazaviRezaei}, the Ricci-Schur theorem was proved for Berwald standard Finsler manifolds. 
The authors of \cite{DengKerteszYan} did so by means of Szabó's theorem \cite[Th. 1]{Szabo}, but this is not actually essential (cf. \cite[p. 318]{SadeghzadehRazaviRezaei}) and the proof works in any signature.\footnote{The importance of this point is accentuated by some Lorentz-Finsler metrics being known to violate Szabo's theorem \cite{FHPV}.} Despite this, the corresponding general statement is not explicitly present in the literature as far as we are aware. We end this article by clarifying the relevant extension of \cite[Lem. 3.2]{SadeghzadehRazaviRezaei} and \cite[Th. 1]{DengKerteszYan}, and observing that it turns out to also extend \cite[Th. 3.1]{BacsoRezaei}.

For $A\subseteq \TT M\setminus\mathbf{0}$ as in \S \ref{prelims}, let $f\in\mathcal{F}(A)$ be $2$-homogeneous. We will consider $f$ to be \emph{quadratic} if it is the restriction to $A$ of $h(\can,\can)$, where $h$ is a $2$-covariant tensor field on $M$.  Then, requiring $h$ to be symmetric, we have that for any $(x,\y)\in A$, 
\[
f(x,\y)=h_{ij}(x)\y^i\y^j,\qquad h_{ij}(x)=\frac{1}{2}\frac{\partial^2f}{\partial \y^i\partial\y^j}(x,\y).
\]
This vertical Hessian being independent of $y$ is the well-known necessary and sufficient condition for $f$ to be quadratic. A careful use of it will be key for Th. \ref{berwald}, in particular when $f=L$ is a pseudo-Finsler metric.

\begin{rem} \label{quadratic}
	 Whenever $L$ is Berwald,\footnote{ We take $L$ being \emph{Berwald} to mean that its spray coefficients are quadratic expressions $2G^i(x,\y)=\gamma_{ab}^i(x)\y^a\y^b$. Notwithstanding, if the fibers $A\cap \TT _xM$ are not connected, Th. \ref{berwald} obviously extends to the case in which merely $\frac{\partial^3G^i}{\partial\y^j\partial\y^k\partial\y^l}=0$ or $\frac{\partial^3\ri}{\partial\y^i\partial\y^j\partial\y^k}=0$.} its Ricci scalar $\ri$ is quadratic, without any need of $L$ being affinely equivalent to any quadratic metric as in the conclusion of Szabó's theorem. One sees this in \cite[p. 318]{SadeghzadehRazaviRezaei} or by directly plugging $G^i(x,\y)=\frac{1}{2}\gamma_{ab}^i(x)\y^a\y^b$ into \eqref{ricci}. Moreover, any pseudo-Finsler metric $L\in\mathcal{F}(A)$ which is \emph{$R$-quadratic} in the sense of \cite{BacsoRezaei} also has a quadratic $\ri$.
\end{rem}

\begin{thm} \label{berwald}
	Let $(M,L)$ be a connected pseudo-Finsler manifold with quadratic Ricci scalar.  If $(M^n,L)$ is Einstein with $n\geq 3$, then it is either (globally) Ricci-flat or (globally) pseudo-Riemannian with constant $\frac{\ri}{L}$.
\end{thm}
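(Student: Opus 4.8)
The plan is to read off, pointwise, the consequence of combining the Einstein equation $\ri=\f L$ with the quadraticity of $\ri$, and then to propagate the resulting local dichotomy to all of $M$ by connectedness. Write $\ri(x,\y)=h_{ij}(x)\y^i\y^j$ with $h$ the symmetric tensor field supplied by quadraticity, and set $U:=\{x\in M\colon\f(x)\neq 0\}$, an open subset, with $Z:=M\setminus U$. On $Z$ we have $\ri=\f L=0$, so the metric is Ricci-flat there. On $U$ the Einstein relation may be divided by $\f$ to give $L(x,\y)=\f(x)^{-1}h_{ij}(x)\y^i\y^j$, which is manifestly quadratic on each fiber; differentiating twice in $\y$, its fundamental tensor is $g_{ij}=\f^{-1}h_{ij}$, independent of $\y$ and non-degenerate by the pseudo-Finsler hypothesis. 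Hence $L$ restricts to a genuine pseudo-Riemannian metric on $U$.

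The second step is to invoke the classical Schur theorem on $U$. There, feeding the quadratic spray $2G^i=\gamma^i_{jk}(x)\y^j\y^k$ of the Levi-Civita connection of $g$ into \eqref{ricci} shows that $\ri$ coincides with the pseudo-Riemannian Ricci tensor contracted twice with the Liouville field, $\ri=\mathrm{ric}^g_{ij}\y^i\y^j$; consequently $h_{ij}=\mathrm{ric}^g_{ij}$ and the Einstein condition becomes the honest pseudo-Riemannian one, $\mathrm{ric}^g_{ij}=\f\,g_{ij}$. Since the pseudo-Riemannian Schur theorem is valid in every signature for $n\geq 3$ (see e.g. \cite{ONeill}), $\f$ is locally constant on $U$; thus $\f\equiv c_V\neq 0$ on each connected component $V$ of $U$.

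Finally, I would glue globally. Each component $V$ of the open set $U$ is open (as $M$ is locally connected), and every point of $\partial V=\overline{V}\setminus V$ lies in $Z$, hence has $\f=0$; but continuity of $\f$ forces $\f\equiv c_V\neq 0$ on $\overline{V}$, so $\partial V=\varnothing$ and $V$ is both open and closed. Connectedness of $M$ then leaves only two cases: either $U=\varnothing$, so $\f\equiv 0$ and $(M,L)$ is globally Ricci-flat; or $U=M$, so $L$ is globally pseudo-Riemannian and $\f=\ri/L$ is a single nonzero constant. This is precisely the claimed alternative.

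I expect the second step to be the crux: one must check carefully that on the quadratic locus $U$ the $2$-homogeneous Ricci scalar \eqref{ricci} genuinely reduces to the pseudo-Riemannian Ricci tensor, so that $\ri=\f L$ transcribes into the standard Einstein equation and the classical Schur theorem becomes available in arbitrary signature. By contrast, the pointwise algebra of the first step and the point-set topology of the gluing are routine, though the latter relies essentially on the connectedness of $M$ and on $\f$ being a function of the base point alone.
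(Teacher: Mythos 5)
Your proposal is correct and follows essentially the same route as the paper: on the open locus $\{\f\neq 0\}$, the Einstein condition plus quadraticity of $\ri$ forces $L$ itself to be quadratic (hence pseudo-Riemannian), the $2$-homogeneous Ricci scalar then reduces to the classical Ricci tensor so that the pseudo-Riemannian Schur theorem makes $\f$ locally constant, and connectedness of $M$ yields the global dichotomy. The only minor difference is the final gluing: the paper concludes by observing that $\f$ has countable (hence, by continuity and connectedness, constant) image, whereas you show each component of $\{\f\neq 0\}$ is closed via a boundary/continuity argument --- both are valid.
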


\begin{proof}
	As above, write $\ri=\f L$ with $\f\in\mathcal{F}(M)$. We will go over the proof in \cite[\S 3]{DengKerteszYan} with the idea of \cite[p. 318]{SadeghzadehRazaviRezaei}, so let us decompose the open set $\left\{x\in M\colon\f(x)\neq 0\right\}$ into its connected components $U_{\alpha}$ and fix one of the indices $\alpha\in\left\{1,2,\ldots\right\}$.\footnote{Notice that in \cite[\S 3]{DengKerteszYan}, $A$ is not the domain of $L$, but a subset of $M$: one of our $U_{\alpha}$'s.} By taking the vertical Hessian of the Einstein condition, one has, for $(x,\y)\in A\cap \TT U_{\alpha}$,
	\begin{equation}
		\frac{1}{2\f(x)}\ri_{\vv i\vv j}(x,\y)=g_{ij}(x,\y),
		\label{L quadratic}
	\end{equation}
	but under our hypotheses the left hand side of this is actually independent of $\y$. This way, as in \cite{SadeghzadehRazaviRezaei}, our equality \eqref{L quadratic} already implies that $L$ is quadratic on $U_\alpha$: there exists a pseudo-Riemannian metric $h^{\alpha}$ there such that $\left.L\right|_{A\cap \TT U_{\alpha}}=\left(h^{\alpha}\right)_{ij}\y^i\y^j$. Denote by $\text{ric}^{\alpha}$ the Ricci tensor of $h^{\alpha}$, so that $\left.\ri\right|_{A\cap \TT U_{\alpha}}=\left(\text{ric}^{\alpha}\right)_{ij}\y^i\y^j$. All that remains is to note that then for $x\in U_\alpha$,
	\[
	\left(\text{ric}^{\alpha}\right)_{ij}(x)=\frac{1}{2}\ri_{\vv i\vv j}(x,\y)=\f(x)g_{ij}(x,\y)=\f(x)\left(h^{\alpha}\right)_{ij}(x),
	\]
	(by taking any $\y\in A\cap \TT _xM$), so the pseudo-Riemannian Schur theorem \cite[Ex. 21 (a)]{ONeill} proves that $\left.\f\right|_{U_{\alpha}}$ is constant. We have shown that $\f$ takes a countable amount of values on $M=\left\{\f=0\right\}\cup U_1\cup U_2\cup\ldots$, thus reaching the conclusion the same way as in \cite{DengKerteszYan}.
\end{proof}

\section{Conclusions} \label{conclusions}

We have given a general description of the differential of the Ricci curvature for Einstein Finsler manifolds of dimension $n\geq 3$ (Th. \ref{main}). As an application, we have established the Schur theorem for such curvature under hypotheses weaker than the metric being weakly Landsberg (Cor. \ref{schur}), generalizing previous results such as \cite[Th 1.1]{SadeghzadehRazaviRezaei} and \cite[Th. 1]{DengKerteszYan}. Our study complements \cite{Robles}, where it was established under the hypothesis of the metric being Randers. As far as we are aware, this provides the only two classes of Finsler metrics containing the Riemannian ones for which the Ricci-Schur theorem is known to be true.

Our proof is variational, stemming from the  invariance under  $\text{Diff}(M)$ of two different Finslerian functionals (Lems. \ref{first} and \ref{second}). By its nature, it cannot be carried out on pseudo-Finsler manifolds, but for them we have observed that the result can be proven whenever the Ricci scalar is quadratic (Th. \ref{berwald}), therefore obtaining a second extension of \cite[Th. 1]{DengKerteszYan} which, independently, extends also \cite[Lem. 3.2]{SadeghzadehRazaviRezaei} and \cite[Th. 3.1]{BacsoRezaei}. Again to the best of our knowledge, this is the only Ricci-Schur theorem which is known to be true for indefinite metrics of an unespecified form.

\section{Acknowledgements}

The author warmly thanks Profs. Miguel Ángel Javaloyes and Miguel Sánchez for the initial conversations which suggested the studied problem and for their valuable revision of the manuscript. He is also grateful to Prof. Nicoleta Voicu for further comments improving this article, and specially for the discussions during a stay in the Transilvania University in Brasov which led to a much better understanding of the methods employed here.

This work was partially supported by the FPU grant (Formación de Profesorado Universitario) with reference number FPU19/01009 from the Spanish Ministerio de Universidades, by the project PID2020-116126GB-I00 funded by MCIN/AEI/10.13039/501100011033, by the project PY20-01391 (PAIDI 2020) funded by Junta de Andalucía-FEDER, and by the framework of IMAG-María de Maeztu grant CEX2020-001105-M funded by MCIN/AEI/\\10.13039/50110001103.

\end{document}